\documentclass{article}
\usepackage{amsmath}
\usepackage{amssymb}
\usepackage{amsthm}
\usepackage{graphicx}

\def\L{\ell}

\def\P{\mathbb{P}}
\def\Q{\mathbf{Q}}
\def\R{\mathbf{R}}
\def\I{\mathcal{I}}

\def\U{\mathcal{U}}
\def\V{\mathcal{V}}
\def\W{\mathcal{W}}
\def\p{\pi}

\DeclareMathOperator{\GL}{GL}
\DeclareMathOperator{\OO}{O}
\DeclareMathOperator{\rk}{rank}
\DeclareMathOperator{\Sym}{\mathcal{S}}
\DeclareMathOperator{\diag}{diag}
\DeclareMathOperator{\Span}{span}
\DeclareMathOperator{\Tr}{Tr}
\DeclareMathOperator{\Imag}{Im}

\newtheorem{theorem}{Theorem}
\newtheorem{lemma}{Lemma}
\newtheorem{definition}{Definition}

\begin{document}

\title{At most nine lines in Euclidean three-space \newline have pairwise distance one}

\author{Roland H\"ofer\\ Dr.R.Hoefer@gmail.com}

\date{\today}

\maketitle

\begin{abstract}
J.E.\ Littlewood posed the question of how many infinite circular cylinders of unit radius can be arranged so that each touches all the others.
We give a computer-free proof that one cannot find ten such cylinders.
This improves a known result, namely that there are no eleven such cylinders, which was obtained making partial use of computer verification.
\end{abstract}

\section{Introduction}
J.E.\ Littlewood \cite [p.\ 20]{Littlewood1968} posed the following problem:\\

\fbox{\begin{minipage}{0.9\textwidth}
 {\bf Problem 7}\\
\indent Is it possible in $3$-space for seven infinite circular cylinders of unit radius each
to touch all the others? \hspace{0mm} Seven is the number suggested by counting constants.
\end{minipage}}\\[2mm]

Bozóki, Lee and Rónyai \cite{Bozoki2015} showed that indeed there exist seven mutually touching infinite cylinders.
On the other hand, do there exist eight, nine, ... mutually touching cylinders?
An approach by Kupferberg to construct eight such lines was shown to be invalid in \cite{Ambrus2008}.
Dillon, Koizumi, and Luo \cite{Dillon2025} showed that there do not exist $11$ such mutually touching cylinders making partial use of computer verification, and they gave a computer-free proof that there do not exist $13$ such cylinders.
They use a method based on linear algebra and Ramsey theory.
In the present paper we give a computer-free proof that no ten such cylinders exist, using only linear algebra.

Two infinite cylinders with radius $r>0$ touch if, and only if, the distance between their axes is $2r$.
We restrict to the scaled case $2r=1$, and show that there are no ten Euclidean lines which are pairwise at distance one.

The key point of this paper is the introduction of Veronese-Pl\"ucker coordinates (Definition \ref{DefVPcoord}), which are real, symmetric $6\times 6$ rank-one matrices, as well as the introduction of a nondegenerate bilinear form  $\left<\cdot,\cdot\right>$ on the space of real symmetric $6\times 6$ matrices (Definition \ref{DefBilFo}).
This bilinear form has the remarkable property that, in Veronese-Pl\"ucker coordinates, lines are isotropic (or null) vectors (Lemma \ref{iso}), and two nonparallel lines are at distance one if, and only if, they are orthogonal in Veronese-Pl\"ucker coordinates (Lemma \ref{BilFo}).
It is also remarkable that the Veronese-Pl\"ucker coordinates of any set of more than four lines which are pairwise at distance one are linearly independent (Lemma \ref{lLinIndep}).
In order to show this lemma, we need some preparations: show that the direction vectors of any three distinct lines are linearly independent (Lemma \ref{degLines}), and establish upper bounds of the dimensions of some totally isotropic subspaces (Lemma \ref{index}).
Our main result Theorem \ref{ninelines} is then a direct consequence.

\section{Independence of direction vectors}\label{secPL}

A \emph{line} $\L$ in Euclidean three-space $\R^3$ is a set
\[ \L=\R l + p := \{\lambda l + p \in\R^3\;|\;\lambda\in\R\},\]
where $p,l\in\R^3$ and $l\neq 0$. 
$p$ is a \emph{point} on the line $\L$, and $l$ is a \emph{direction vector} of the line $\L$.
Note that $\L$ possesses two unit direction vectors, namely $\pm l/\sqrt{l\cdot l}$, where $\cdot$ denotes the Euclidean scalar product.

Let $d(p,p')=\sqrt{(p-p')\cdot(p-p')}=\|p-p'\|_2$ denote the Euclidean distance between two points $p,p'\in\R^3$.
The \emph{distance} between two lines $\L$, $\L'$ is the smallest Euclidean distance between any two points on these lines,
\[ d(\L,\L'):=\inf\{d(p,p')\;|\;p\in \L,\,p'\in \L'\}.\]

\begin{lemma}\label{degLines}
a) When considering sets of at least five lines $\L_1,\ldots,\L_n$, $n\ge 5$ of which any two are at distance one, no two of them are parallel. \\
b) If three lines $\L_1,\L_2,\L_3$ have pairwise unit distance and are pairwise nonparallel, then the determinant of their direction vectors is not zero.
\end{lemma}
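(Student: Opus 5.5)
Both parts rest on the distance formula for nonparallel lines. If $\L=\R l+p$ and $\L'=\R l'+p'$ are nonparallel, put $n=l\times l'/\|l\times l'\|$. Then
\[ d(\L,\L')=|n\cdot(p-p')|. \]
So unit distance means that the "heights" of the two lines along their common normal direction $n$ differ by exactly $\pm1$.

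I start with b), which is immediate. Suppose the determinant of the three direction vectors vanishes. Then they lie in a common plane, so there is a unit vector $n$ orthogonal to all three. Because the lines are pairwise nonparallel, $n$ (up to sign) is the common normal direction of every pair. Put $h_i=n\cdot p_i$. The formula gives $|h_i-h_j|=1$ for all $i\neq j$. But
\[ h_1-h_3=(h_1-h_2)+(h_2-h_3)\in\{0,\pm2\}, \]
which contradicts $|h_1-h_3|=1$.

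For a), I first analyse a parallel pair. Let $\L_1\parallel\L_2$ have unit direction $u$, and choose points $p_1,p_2$ with $w=p_1-p_2\perp u$ and $\|w\|=1$. Let $P$ be the plane containing $\L_1,\L_2$.
\begin{itemize}
\item A third line parallel to $u$ must sit at the apex of an equilateral triangle over $w$ in the cross-section. So there are at most three mutually parallel lines, and a third one lies at distance $\sqrt3/2$ from $P$.
\item A line $\L$ not parallel to $u$ has unit common normal $n\perp u$ with $n\cdot(p_1-q)=\pm1$ and $n\cdot(p_2-q)=\pm1$, where $q\in\L$. Hence $n\cdot w\in\{0,\pm2\}$. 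Since $|n\cdot w|\le1$, this forces $n\cdot w=0$, so $n=\pm u\times w$ is the normal of $P$.
\item Consequently such an $\L$ is parallel to $P$ and lies in one of the two planes $H_\pm$ parallel to $P$ at distance $1$.
\end{itemize}

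Now take $n\ge5$ lines and assume $\L_1\parallel\L_2$.

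Suppose first there is a third line $\L_3\parallel u$. Applying the above to the pair $(\L_1,\L_3)$, every remaining line must be parallel both to $P$ and to the plane of $\L_1,\L_3$. These planes differ and both contain $u$, so the remaining line would be parallel to $u$. That would give four mutually parallel lines, which is impossible.

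So all of the at least three remaining lines lie in $H_+\cup H_-$. By pigeonhole two of them lie in the same plane, say $H_+$. Two lines in a common plane are either intersecting (distance $0$) or parallel. So they form a second parallel pair $M_1\parallel M_2$, with direction not parallel to $u$, whose plane is $Q=H_+$.

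Take any further line $N$. Measure heights along the common normal of $P$ and $Q$, so that $P$ is at height $0$ and $Q$ at height $1$. Then:
\begin{itemize}
\item If $N$ is parallel to neither pair, the analysis of $(\L_1,\L_2)$ puts it at height $\pm1$, and the analysis of $(M_1,M_2)$ puts it at height $0$ or $2$. This is a contradiction.
\item If $N$ is parallel to $u$, or to the direction of the $M_i$, it is a third parallel line to one pair. This was excluded above, with the same argument applied to $(M_1,M_2)$.
\end{itemize}
Hence no two of the lines are parallel.

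The main obstacle is this case bookkeeping in a): verifying that "in the same plane $H_+$" really forces the second pair to be parallel, and that the height constraints coming from the two pairs are incompatible. The key computation throughout is the step $n\cdot w\in\{0,\pm2\}$ with $|n\cdot w|\le1$, which forces $n\cdot w=0$.
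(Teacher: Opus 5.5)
Your proof is correct and follows essentially the paper's route. Part b) is the same height argument along the common normal, and part a) uses the same case split (whether some third line is parallel to $\L_1\parallel\L_2$), with nonparallel lines confined to the two planes at distance one from the plane of the pair and coplanar lines forced to be parallel. Your $n\cdot w\in\{0,\pm2\}$ step makes explicit the confinement that the paper leaves implicit when it normalizes $\L_3$ to have direction in the $xy$-plane. You then close the cases by intersecting two planes and using a second parallel pair, rather than by the paper's explicit distances $1\pm\sqrt3/2$, $\sqrt3$ and $2$.
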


\begin{proof}
a) Assume $\L_1\parallel\L_2$.
After applying a Euclidean motion, we may assume that $\L_1=\R(1,0,0)^T$, $\L_2=\R(1,0,0)^T+(0,1,0)^T$.

If $\L_i\not\parallel\L_1$, $i=3,4,5$, then applying a Euclidean motion which leaves $\L_1$, $\L_2$ fixed, we may assume that $\L_3=\R(\cos\alpha,\sin\alpha,0)^T+(0,0,z_3)^T$ with $\alpha\in(0,\pi)$.
Clearly, $d(\L_1,\L_3)=d(\L_2,\L_3)=|z_3|$, so $z_3=\pm 1$. 
Choose $z_3=+1$.
$\L_4$ must be of the form  $\R(\cos\beta,\sin\beta,0)^T+(x_4,y_4,z_4)^T$ with $\beta\in(0,\pi)$ and $z_4\in\{-1,+1\}$, cf.\ types 1a and 1b in figure \ref{fig:ParallelLines}.
If $z_4=-1$, then $d(\L_3,\L_4)\ge 2$.
So $z_4=+1$. 
Since any two nonparallel lines in the same plane meet and have distance zero, we must have $\L_4\parallel\L_3$, and $\L_4$ must be one of the lines $\R(\cos\alpha,\sin\alpha,0)^T+(-\sin\alpha,\cos\alpha,1)^T$ and $\R(\cos\alpha,\sin\alpha,0)^T+(\sin\alpha,-\cos\alpha,1)^T$.
But then any line $\L_5$ with $d(\L_i,\L_5)=1$, $i=1,2,3$ must also be one of these two lines, and $d(\L_4,\L_5)\in\{0,2\}$. 
This contradicts our assumption $d(\L_4,\L_5)=1$.

So we may assume that $\L_3\parallel\L_1$ is one of the two lines $\R(1,0,0)^T+\frac{1}{2}(0,1,\pm\sqrt{3})^T$, cf.\ types 2a and 2b in figure \ref{fig:ParallelLines}.
If $\L_4\parallel\L_3$, then $\L_4$ is also of type 2a or 2b, and $d(\L_3,\L_4)\in\{0,\sqrt{3}\}$.
If $\L_4\not\parallel\L_3$, then it is of type 1a or 1b, and $d(\L_3,\L_4)=1\pm\sqrt{3}/2$.
In either case, this contradicts the assumption $d(\L_3,\L_4)=1$.

\begin{figure}[h]
\begin{minipage}[t]{.5\textwidth}
  \centering
  \includegraphics[width=1\linewidth]{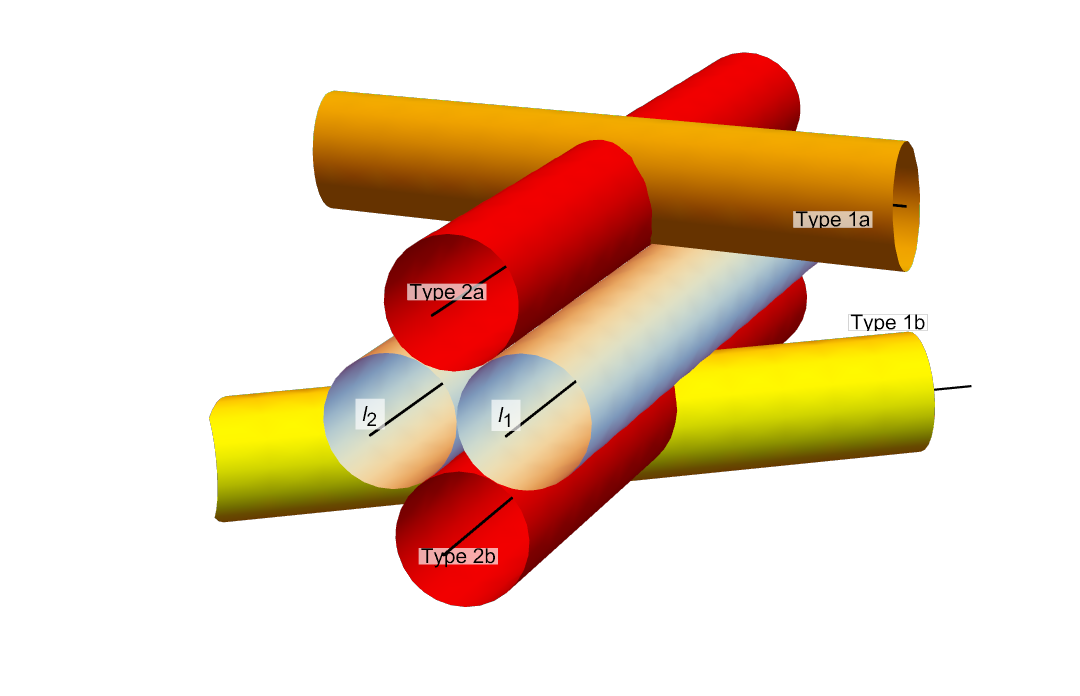}
  \caption{two parallel lines}
  \label{fig:ParallelLines}
\end{minipage}%
\begin{minipage}[t]{.5\textwidth}
  \centering
  \includegraphics[width=0.8\linewidth]{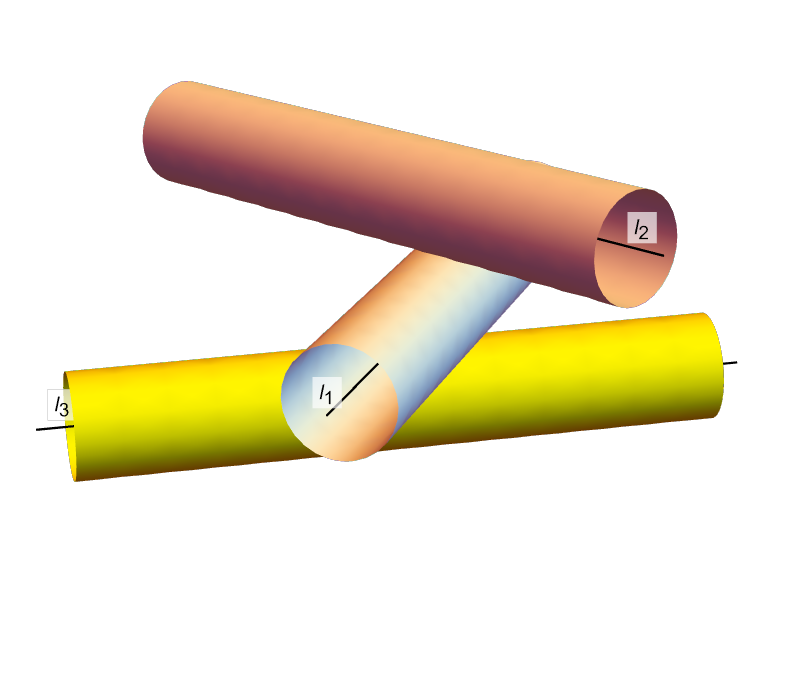}
  \caption{coplanar direction vectors}
  \label{fig:CoplDirVec}
\end{minipage}
\end{figure}

b) Let $l_1,l_2,l_3$ be the direction vectors of $\L_1,\L_2,\L_3$. 
Assume $\det(l_1,l_2,l_3)=0$. 
Then $l_1\times l_2$, $l_1\times l_3$, $l_2\times l_3$ are multiples of a unit vector $u$. 
Let $p_i\in\L_i$ and $d_i:=p_i\cdot u$, $i=1,2,3$.
Then $d(\L_i,\L_j)=|(p_i-p_j)\cdot(l_i\times l_j)|/\|l_i\times l_j\|_2=|(p_i-p_j)\cdot u|=|d_i-d_j|$ for $i,j=1,2,3$.
But $|d_1-d_2| = |d_1-d_3| = |d_2-d_3|=1$ is not possible for real $d_i$. Cf.\ figure \ref{fig:CoplDirVec}.

\end{proof}

\section{Veronese-Pl\"ucker coordinates and a bilinear form}

To the line $\L=\R l + p$ where $p,l\in\R^3$ and $l\neq 0$, we associate the so-called \emph{Pl\"ucker coordinates}, which are the point
\[ \p(\L):=\R(l_1,l_2,l_3,m_1,m_2,m_3)^T \in \P^5(\R)\]
in five-dimensional projective space over $\R$, where the \emph{moment} vector $m\in\R^3$ is defined as
$ m:=p\times l$.
We call any $x\in\p(\L)\setminus\{0\}$ a \emph{representative} of $\p(\L)$.
Note that $\frac{l\times m}{l\cdot l}\in \L$ and that the Pl\"ucker coordinates are independent of the choice of $p\in\L$.
Furthermore, $m\perp l$ implies that the projective point $\p(\L)$ is an element of the \emph{Klein quadric}
\[ \Q:=\{\R(x_1,\ldots,x_6)^T\in\P^5(\R)\;|\;x_1x_4+x_2x_5+x_3x_6=0\}.\]
Conversely, an element $\R(x_1,\ldots,x_6)^T\in\Q$ corresponds to a Euclidean line $\L$ if, and only if, $l:=(x_1,x_2,x_3)^T\neq 0$.
In this case, with $m=(x_4,x_5,x_6)^T$ and $p=\frac{l\times m}{l\cdot l}$, we have $\L= p+\R l=\p^{-1}(\R(x_1,\ldots,x_6)^T)$.
%So, $\p$ takes the set of Euclidean lines in $\R^3$ bijectively to the set $\{\R(x_1,\ldots,x_6)^T\in\Q\;|\;(x_1,x_2,x_3)^T\neq 0\}$.

It is well-known that two lines intersect or are parallel if, and only if, the corresponding points $\R x, \R y\in\Q$ on the Klein quadric are \emph{conjugate}, i.e.,
\[ \left(x,y\right) := x_1 y_4 + x_2 y_5 + x_3 y_6 + x_4 y_1 + x_5 y_2 + x_6 y_3=0.\]
%Two cylinders with radius $1/2$ and axes $\L$ and $\L'$ touch if, and only if, the lines $\L$ and $\L'$ are at distance one.
Let $\R x=\p(\L)$ and $\R x'=\p(\L')$ be the Pl\"ucker coordinates of the two lines $\L$ and $\L'$, and $x=(l_1,l_2,l_3,m_1,m_2,m_3)^T$,
%$x'=(l_1',l_2',l_3',m_1',m_2',m_3')$,
$l=(l_1,l_2,l_3)^T$, $m=(m_1,m_2,m_3)^T$, etc.
If $\L\parallel \L'$, then $l$ and $l'$ are linearly dependent, and we may assume $l=l'$.
Then we have $p:=(l\times m)/(l\cdot l)\in \L$ and $p':=(l\times m')/(l\cdot l)\in \L'$, and $d(\L,\L')=d(p,p')=\|l\times(m-m')\|_2/(l\cdot l)$.
If $\L\nparallel \L'$, then $l\times l'\neq 0$, and a short calculation shows that 
\[ d(\L,\L')=\frac{|\left(x,x'\right)|}{\|l\times l'\|_2}.\]

\begin{definition}\label{DefVPcoord}
For a line $\L$ we define the Veronese-Pl\"ucker coordinates
\[ \varphi(\L) := \binom{l}{m}\binom{l}{m}^T\in\R^{6\times 6}, 
%\left(\begin{array}{ccc} l_1 l_1 & \cdots & l_1 m_3\\ \vdots & \ddots & \vdots\\ m_3 l_1 & \cdots & m_3 m_3\end{array}\right),
\quad\binom{l}{m}\in\pi(\L),\;l\cdot l=1.\]
\end{definition}

Note that the two representatives $\binom{l}{m}$ and $-\binom{l}{m}$ lead to the same Veronese-Pl\"ucker coordinate $\varphi(\L)$, and that $\varphi(\L)$ is a rank-one matrix.

\begin{definition}\label{DefBilFo}
Define a bilinear form $\left<\cdot,\cdot\right>$ on the space of real symmetric $6\times 6$ matrices $\Sym_6(\R)$ as follows:
\[ \left<A,B\right> := \Tr(M_1 A M_1 B) + \Tr(M_2 A M_2 B) - \Tr(M_1 A)\Tr(M_1 B),\]
where
\[ M_1:=\begin{pmatrix} I_3 & 0\\ 0 & 0\end{pmatrix},\quad M_2:=\begin{pmatrix} 0 & I_3\\ I_3 & 0\end{pmatrix}\quad\in\R^{6\times 6}.\]
\end{definition}

\begin{lemma}\label{iso}
The Veronese-Pl\"ucker coordinates $\varphi(\L)$ are isotropic vectors of the space $(\Sym_6(\R),\left<\cdot,\cdot\right>)$, i.e.,
$\left<\varphi(\L),\varphi(\L)\right>=0$ for any line $\L$.
\end{lemma}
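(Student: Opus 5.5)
Write $x=\binom{l}{m}$ with $l\cdot l=1$ and $m=p\times l$, so that $\varphi(\L)=xx^T$. The plan is to compute each of the three trace terms in Definition \ref{DefBilFo} for $A=B=xx^T$ using the identity $\Tr(M xx^T N xx^T)=(x^TMx)(x^TNx)$ for symmetric $M,N$. This identity holds because $\Tr(Mxx^TNxx^T)=\bigl(x^TNx\bigr)\,\Tr(Mxx^T)=(x^TNx)(x^TMx)$, using cyclicity of the trace. Each term then reduces to a product of scalar quadratic forms in $x$.

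Applying this with the matrices of Definition \ref{DefBilFo}, we get the following:
\begin{itemize}
\item $\Tr(M_1 xx^T M_1 xx^T)=(x^TM_1x)^2$, and $x^TM_1x=l\cdot l=1$.
\item $\Tr(M_2 xx^T M_2 xx^T)=(x^TM_2x)^2$, and $x^TM_2x=2\,l\cdot m=(x,x)$.
\item $\Tr(M_1xx^T)\Tr(M_1xx^T)=(x^TM_1x)^2=(l\cdot l)^2=1$.
\end{itemize}
Hence
\[
\left<\varphi(\L),\varphi(\L)\right>=(l\cdot l)^2+4(l\cdot m)^2-(l\cdot l)^2=4(l\cdot m)^2 .
\]

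It remains to show that $l\cdot m=0$. This is exactly the observation already made when introducing Pl\"ucker coordinates: $m=p\times l$ is perpendicular to $l$, so $\p(\L)$ lies on the Klein quadric $\Q$. Therefore $\left<\varphi(\L),\varphi(\L)\right>=0$. Since $\varphi(\L)$ does not depend on the sign of the representative, this holds for every line $\L$.

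There is no real obstacle here. The only point needing care is the trace identity, including getting the factor $2$ in $x^TM_2x=2\,l\cdot m$ right. Note that the normalization $l\cdot l=1$ is not actually needed for isotropy: the first and third terms cancel for any scaling. The normalization will matter later, in Lemma \ref{BilFo}.
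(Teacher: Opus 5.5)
Your proof is correct and follows the paper's argument. Both reduce the three trace terms to $(l\cdot l)^2+4(l\cdot m)^2-(l\cdot l)^2$ and conclude from $l\cdot m=0$; you additionally spell out the rank-one identity $\Tr(Mxx^TNxx^T)=(x^TMx)(x^TNx)$, which the paper uses without stating it, and you rightly note that the normalization $l\cdot l=1$ is not needed for isotropy.
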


\begin{proof}
Let $\L$ be a line with Veronese-Pl\"ucker coordinates $\varphi(\L)=\binom{l}{m}\binom{l}{m}^T$. Then $l^T m=0$ and
$\left<\varphi(\L),\varphi(\L)\right>$ $=$ $(l\cdot l)^2+4(l\cdot m)^2-(l\cdot l)^2$ $=$ $1+0-1$ $=$ $0$.
\end{proof}

\begin{lemma}\label{BilFo}
Two nonparallel lines $\L,\L'$ are at distance one if, and only if, $\left<\varphi(\L),\varphi(\L')\right> = 0$.
\end{lemma}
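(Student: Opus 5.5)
Write $x=\binom{l}{m}$ and $x'=\binom{l'}{m'}$ for the normalized representatives, so that $l\cdot l=l'\cdot l'=1$, $\varphi(\L)=xx^T$ and $\varphi(\L')=x'x'^T$. The plan is to evaluate $\left<\varphi(\L),\varphi(\L')\right>$ in closed form in terms of $l,l',m,m'$. We then compare the result with the distance formula $d(\L,\L')=|(x,x')|/\|l\times l'\|_2$, which holds for nonparallel lines.

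The computation reduces to the standard identity $\Tr(M\,xx^T\,M\,x'x'^T)=(x^TMx')^2$ for symmetric $M$, together with $\Tr(M_1 xx^T)=x^TM_1x=l\cdot l$. We have $M_1x=\binom{l}{0}$, so $x^TM_1x'=l\cdot l'$. We also have $M_2x=\binom{m}{l}$, so $x^TM_2x'=l\cdot m'+m\cdot l'=(x,x')$. Together with $l\cdot l=l'\cdot l'=1$, these give
\[ \left<\varphi(\L),\varphi(\L')\right>=(l\cdot l')^2+(x,x')^2-1 . \]
This is the same computation as in Lemma \ref{iso}, now with two different vectors.

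Next we use Lagrange's identity for unit vectors:
\[ \|l\times l'\|_2^2=(l\cdot l)(l'\cdot l')-(l\cdot l')^2=1-(l\cdot l')^2. \]
Substituting this gives
\[ \left<\varphi(\L),\varphi(\L')\right>=(x,x')^2-\|l\times l'\|_2^2 . \]
Since the lines are nonparallel, $\|l\times l'\|_2\neq 0$. Dividing by it, the bilinear form vanishes if and only if $(x,x')^2/\|l\times l'\|_2^2=1$, that is, if and only if $d(\L,\L')^2=1$, which is $d(\L,\L')=1$. The choice of signs of the representatives $\pm x$ and $\pm x'$ does not matter, because everything depends only on $xx^T$ and $x'x'^T$.

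There is no genuine obstacle; the proof is a direct computation. The step needing the most care is applying the previously stated distance formula correctly with the normalization $l\cdot l=1$. One also has to note that $(x,x')$ as defined is exactly $x^TM_2x'$, with no factor of $2$.
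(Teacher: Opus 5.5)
Your proof is correct and follows essentially the same route as the paper. Both arguments use the trace identity $\Tr(M\,xx^T M\,x'x'^T)=(x^TMx')^2$ and Lagrange's identity to reduce the condition to $(x,x')^2=\|l\times l'\|_2^2$. The paper writes this as a chain of equivalences, while you package it as the closed-form identity $\left<\varphi(\L),\varphi(\L')\right>=(x,x')^2-\|l\times l'\|_2^2=\|l\times l'\|_2^2\,\bigl(d(\L,\L')^2-1\bigr)$.
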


\begin{proof}
Choose representatives $\binom{l}{m}\in\pi(\L)$, $\binom{l'}{m'}\in\pi(\L')$ with $l\cdot l = l'\cdot l'=1$, then
\begin{eqnarray*}
& & d(\L,\L')=1\\
&\Leftrightarrow & \left |\left({\scriptstyle\binom{l}{m}},{\scriptstyle\binom{l'}{m'}}\right)\right|^2 = \|l\times l'\|_2^2 \\
&\Leftrightarrow & (m\cdot l'+l\cdot m')^2= (l \cdot l) (l' \cdot l')-(l\cdot l')^2\\
&\Leftrightarrow & {\scriptstyle \left(\binom{l}{m}^T M_2 \binom{l'}{m'}\right)^2=\left\|M_1\binom{l}{m}\right\|^2 \left\|M_1\binom{l'}{m'}\right\|^2- \left(\binom{l}{m}^T M_1 \binom{l'}{m'}\right)^2}\\
&\Leftrightarrow & {\scriptstyle \left(\binom{l}{m}^T M_2 \binom{l'}{m'}\right) \left(\binom{l'}{m'}^T M_2 \binom{l}{m}\right) = }\\
& & {\scriptstyle \left( \binom{l}{m}^T M_1\binom{l}{m}\right)  \left(\binom{l'}{m'}^T M_1\binom{l'}{m'}\right) - \left(\binom{l}{m}^T M_1 \binom{l'}{m'}\right)\left(\binom{l'}{m'}^T M_1 \binom{l}{m}\right)}\\
&\Leftrightarrow & \Tr(M_2\varphi(\L) M_2 \varphi(\L')) = \Tr(M_1\varphi(\L))\Tr(M_1\varphi(\L')) - \Tr(M_1\varphi(\L)M_1\varphi(\L'))\\
&\Leftrightarrow & \left<\varphi(\L),\varphi(\L')\right> = 0.
\end{eqnarray*}
In the second to last equivalence we used the cyclicity of trace for products of matrices.
\end{proof}

\section{Results on $n\ge5$ lines}

In the following lemma, we examine properties of $\left<\cdot,\cdot\right>$ when we restrict the Pl\"ucker coordinates to subspaces $\U\subset\R^6$.
We denote the subspace of the corresponding symmetric matrices by $\Sym(\U)$.
More precisely, if $\dim\U=r$ and the columns of $\beta\in\R^{6\times r}$ are a basis of $\U$, then $\Sym(\U):=\beta\Sym_r(\R)\beta^T$.

\begin{lemma}\label{index}
Let $\U\subset\R^6$ be a subspace of dimension $r:=\dim\U$ such that the projection $\U\to\R^3$, $\binom{l}{m}\mapsto l$ is surjective. 
%Let $b_1,\ldots,b_r\in\R^6$ be a basis of $\U$ and define $\Sym(\U):=(b_1,\ldots,b_r)\Sym(r\times r,\R)(b_1,\ldots,b_r)^T$.
Let $\left<\cdot,\cdot\right>_\U$ be the restriction of $\left<\cdot,\cdot\right>$ to $\Sym(\U)\times\Sym(\U)$. 
Then every totally isotropic subspace of $(\Sym(\U),\left<\cdot,\cdot\right>_\U)$ has dimension at most $2r-3\le 9$.
\end{lemma}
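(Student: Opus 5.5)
The plan is to reduce the statement to a signature count for the quadratic form $q(A):=\left<A,A\right>$ on $\Sym(\U)$, which has dimension $N:=r(r+1)/2$. If $q$ has $p$ positive eigenvalues, $n$ negative eigenvalues and a radical of dimension $z$, then every totally isotropic subspace $\W$ satisfies $\dim\W\le z+\min(p,n)=N-\max(p,n)$. The reason is that $\W$ meets any definite subspace trivially. So it suffices to exhibit a definite subspace $\mathcal{D}\subset\Sym(\U)$ with $\dim\mathcal{D}\ge N-(2r-3)=(r^2-3r+6)/2$. Alternatively, one can compute $(p,n,z)$ outright.

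First I fix coordinates. Since $\binom{l}{m}\mapsto l$ maps $\U$ onto $\R^3$, the kernel $\U\cap(0\oplus\R^3)$ has dimension $r-3$, so $3\le r\le 6$. I would choose a basis of $\U$ of the form $\binom{e_i}{a_i}$ for $i=1,2,3$, together with $\binom{0}{n_j}$ for $j=1,\ldots,r-3$, where $n_1,\dots,n_{r-3}$ are linearly independent. Writing $A=\begin{pmatrix}P&Q\\ Q^T&R\end{pmatrix}$ in $3\times3$ blocks, a direct computation from Definition \ref{DefBilFo} gives
\[ q(A)=\Tr(P^2)-(\Tr P)^2+2\Tr(PR)+2\Tr(Q^2). \]
Elements of $\Sym(\U)$ are parametrized by $\beta S\beta^T$ with $S\in\Sym_r(\R)$. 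I would split $S$ into three blocks: the $3\times3$ block $S_{11}$ (``$l\otimes l$''), the $3\times(r-3)$ mixed block $S_{12}$, and the $(r-3)\times(r-3)$ block $S_{22}$. Then $P=S_{11}$, $Q$ depends on $S_{11}$ and $S_{12}$, and $R$ depends on all three blocks.

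Next I would exploit the block structure. The matrices built from $S_{22}$ alone have $P=Q=0$, so they form a totally isotropic subspace $\mathcal{N}$ of dimension $h=(r-3)(r-2)/2$. The form pairs $\mathcal{N}$ with the $S_{11}$-part through the term $2\Tr(PR)$. Since $n_1,\dots,n_{r-3}$ are independent, I expect this pairing to have full rank $h$, which would give $h$ hyperbolic planes contributing $h$ positive and $h$ negative directions. After splitting these off, what remains is the following:
\begin{itemize}
\item the complement in $\Sym_3$ of the paired part, carrying $\Tr(P^2)-(\Tr P)^2$, which has signature $(5,1)$ on all of $\Sym_3$;
\item the mixed part $S_{12}$ (dimension $3(r-3)$), on which $q$ is essentially $2\Tr(Q^2)$ with $Q$ of the form $\sum_j c_j n_j^T$, plus cross terms.
\end{itemize}
For the mixed part I would try to show that it contributes one sign predominantly. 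Adding up the counts should give $\max(p,n)\ge N-(2r-3)$. As a check, for $r=3$ this requires a definite subspace of dimension $3$ in the $6$-dimensional space with signature $(5,1)$, which holds easily.

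The main obstacle is this final signature bookkeeping in the mixed and interacting blocks. The cross terms between $S_{11}$, $S_{12}$ and $S_{22}$ depend on the $a_i$ and $n_j$, so the signature has to be shown independent of these choices. I would first handle $r=6$ directly, where $\U=\R^6$ and one needs $\max(p,n)\ge 12$ with $N=21$, by diagonalizing $q$ on $\Sym_6$ explicitly. For general $r$ I would then use that restricting to the subspace $\Sym(\U)$ can lower each of $p$ and $n$ by at most the codimension, and refine that estimate with the surjectivity hypothesis. If that estimate is too weak, the fallback is to write down an explicit definite subspace of the required dimension in the chosen coordinates and verify its definiteness by completing squares.
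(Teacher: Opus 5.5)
Your reduction is the same as the paper's: a definite subspace of $\Sym(\U)$ of dimension $\dim\Sym(\U)-(2r-3)$ meets every totally isotropic subspace trivially. Your case $r=6$ is fine: $2\Tr(Q^2)$ has signature $(6,3)$, and the totally isotropic $R$-part splits the $(P,R)$-part into $(6,6)$, so the total is $(12,9)$. Your remark that the $S_{22}$-part is totally isotropic and pairs with full rank against $S_{11}$ is also correct.

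For $r\le5$, however, you never produce the definite subspace, which is the whole content of the lemma, and the routes you outline do not reach it.
\begin{itemize}
\item The codimension estimate is too weak. For $r=5$ it gives only $p\ge 12-6=6$ where $8$ is needed, and for $r\le4$ it gives nothing.
\item Your bookkeeping starts from a false picture. Write $N_1=(a_1,a_2,a_3)$ and $N_2=(n_1,\ldots,n_{r-3})$. Then the off-diagonal block is $Q=S_{11}N_1^T+S_{12}N_2^T$ and $R=N_1S_{11}N_1^T+\cdots$. So the $S_{11}$-block carries $\Tr(S_{11}^2)-(\Tr S_{11})^2+4\Tr\bigl((S_{11}K)^2\bigr)$, with $K$ the symmetric part of $N_1$, not $\Tr(P^2)-(\Tr P)^2$.
\item Your $r=3$ check is therefore wrong. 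For $K=\diag(5,-5,0)$ one gets $q(E_{12}+E_{21})=2(1-100)<0$, and the signature is $(4,2)$, not $(5,1)$.
\item The signature is \emph{not} independent of $\U$, so it cannot be shown to be. Only the inequality $\min(p,n)+z\le 2r-3$ holds uniformly.
\item The mixed block does not contribute a single sign. The part of $S_{12}$ detected by $N_2$ carries $2\Tr(X^2)$ on all $(r-3)\times(r-3)$ matrices $X=N_2^TS_{12}$, which has $\binom{r-3}{2}$ negative directions. The remaining $s(3-s)$ directions are isotropic and pair with $S_{11}$ through $N_1$.
\end{itemize}

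The missing idea is a normal form for the two Gram matrices on $\U$. Add kernel vectors $\binom{0}{n_j}$ to the $\binom{e_i}{a_i}$. Then rotate the $l$-frame so that its first $s=r-3$ vectors span $\Imag N_2$ and the remaining $t=3-s$ diagonalize $N_1+N_1^T$ on $(\Imag N_2)^\perp$. This gives $G_1=\diag(I_3,0)$ and $G_2=\begin{pmatrix}0&0&I_s\\0&\Lambda&0\\I_s&0&0\end{pmatrix}$ with $\Lambda=\diag(\lambda_1,\ldots,\lambda_t)$.

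After that, everything outside the $t\times t$ block is either hyperbolic or can be absorbed positively. On that block the form is $\Tr(U^2)+\Tr\bigl((\Lambda U)^2\bigr)-(\Tr U)^2$, and you need $\binom t2$ positive directions there. The off-diagonal coordinates have sign $1+\lambda_i\lambda_j$, which may be negative. So for $t=3$ you need one more observation: some pair satisfies $1+\lambda_i\lambda_j>0$, because $(\lambda_1\lambda_2)(\lambda_1\lambda_3)(\lambda_2\lambda_3)\ge0$. Together with the traceless diagonal matrices, this gives the space $\V$, and the paper's $\W$ assembles these pieces into a positive definite subspace of the right dimension. Your sketch does not address this $\Lambda$-dependence, and without the normal form the count for $r\le5$ is not carried out.
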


\begin{proof}
Since  $\U\to\R^3$, $\binom{l}{m}\mapsto l$ is surjective, choose a basis $\binom{l_1}{m_1},\ldots,\binom{l_r}{m_r}$ of $\U$ such that $(l_1, l_2, l_3)=I_3$ and such that $\binom{l_4}{m_4},\ldots,\binom{l_r}{m_r}$ is a basis of the kernel of $\binom{l}{m}\mapsto l$, i.e.\ $l_4=\ldots,l_r=0$.
Define $L:=(l_1,\ldots,l_r)$, $M:=(m_1,\ldots,m_r)\in\R^{3\times r}$, then
\[ \begin{pmatrix} L \\ M \end{pmatrix} = \begin{pmatrix} I_3 & 0  \\ m_1 \, m_2 \, m_3 & m_4\,\ldots\, m_r\end{pmatrix}\in\R^{6\times r}. \]
Write $(m_1 \, m_2 \, m_3)=N_1\in\R^{3\times 3}$ and $(m_4,\ldots,m_r)=N_2\in\R^{3\times s}$, $s:=r-3$, $\rk(N_2)=s$. 
Choose a matrix $A=\begin{pmatrix} Q & 0\\ B & C\end{pmatrix}\in\R^{r\times r}$, where $Q\in\OO(3,\R)$, $B\in\R^{s\times 3}$, $C\in\GL(s,\R)$.
Then
\begin{eqnarray*}
& & \begin{pmatrix} L \\ M \end{pmatrix} A =  \begin{pmatrix} Q & 0\\ N_1 Q+N_2 B & N_2 C \end{pmatrix},  \\
& & G_1 := A^T \begin{pmatrix} L \\ M \end{pmatrix}^T M_1 \begin{pmatrix} L \\ M \end{pmatrix} A =  \begin{pmatrix} I_3 & 0 \\ 0 & 0\end{pmatrix}\in\R^{r\times r}, \\
& & A^T \begin{pmatrix} L \\ M \end{pmatrix}^T M_2 \begin{pmatrix} L \\ M \end{pmatrix} A =
\begin{pmatrix} Q^T N_1 Q + Q^T N_2 B +Q^T N_1^T Q + B^T N_2^T Q & Q^T N_2 C\\ (Q^T N_2 C)^T & 0\end{pmatrix}. 
\end{eqnarray*}
Choose an orthonormal basis $q_1,\ldots,q_s$ of $\Imag N_2$.
Choose an orthonormal basis $q_{s+1},\ldots,q_3$ of $(\Imag N_2)^\perp$ which diagonalizes the restriction of $N_1+N_1^T$ to $(\Imag N_2)^\perp$.
Define $Q:=(q_1,q_2,q_3)$.
Then  $Q^T N_2=\begin{pmatrix} R \\ 0\end{pmatrix}$ for some $R\in\GL(s,\R)$,  and $Q^T(N_1+N_1^T)Q = \begin{pmatrix} D_{11} & D_{12} \\ D_{12}^T & \Lambda\end{pmatrix}$, where $D_{11}\in\Sym_s(\R)$, $D_{12}\in\R^{s\times t}$, $\Lambda=\diag(\lambda_1,\ldots,\lambda_t)$, $t:=3-s$.
Define $C:=R^{-1}$ and $B:=R^{-1} \begin{pmatrix} -\frac{1}{2}D_{11} & -D_{12}\end{pmatrix} $.
Then we have  $Q^T N_2 C=\begin{pmatrix} I_s \\ 0\end{pmatrix}$ and
\[ Q^T N_2 B = \begin{pmatrix} I_s \\ 0\end{pmatrix}\begin{pmatrix} -\frac{1}{2}D_{11} & -D_{12}\end{pmatrix} = \begin{pmatrix} -\frac{1}{2}D_{11} & - D_{12} \\ 0 & 0 \end{pmatrix}, \]
so
\[ Q^T (N_1+N_1^T) Q  + Q^T N_2 B + (Q^T N_2 B)^T= \begin{pmatrix} 0 & 0 \\ 0 & \Lambda\end{pmatrix},\]
and finally
\[ G_2 := A^T\begin{pmatrix} L \\ M \end{pmatrix}^T M_2 \begin{pmatrix} L \\ M \end{pmatrix} A = \begin{pmatrix} 0 & 0 & I_s \\ 0 & \Lambda & 0 \\ I_s & 0 & 0 \end{pmatrix}\in\R^{r\times r}. \]
So, under the coordinate transformation
\[ \Phi: \Sym_r(\R)\to\Sym(\U), \quad S\mapsto \begin{pmatrix} L \\ M \end{pmatrix} A S A^T \begin{pmatrix} L \\ M \end{pmatrix}^T,\]
the restricted form is
\[ \left<S,T\right>_{\U} = \Tr(G_1 S G_1 T) + \Tr(G_2 S G_2 T) - \Tr(G_1 S) \Tr(G_1 T).\]

In the following we construct a ``{}large''{} positive definite subspace $\W$ of $\Sym_r(\R)$.
First we choose a traceless subspace $\V\subset\Sym_t(\R)$ of dimension $\binom t2$ as follows:
For $t\in\{0,1\}$ take $\V=\{0\}$.  
For $t=2$, take $\V=\R\,\diag(1,-1)$.  
For $t=3$, take a three-dimensional space spanned by the traceless matrices $\diag(1,-1,0)$, $\diag(1,0,-1)$ and $(E_{ij}+E_{ji})$ for a pair $1\le i<j\le 3$ with $1+\lambda_i\lambda_j>0$.  
Such a pair exists since $(\lambda_1\lambda_2)(\lambda_1\lambda_3)(\lambda_2\lambda_3)\ge 0$.  
%Since $\Lambda$ is diagonal, the diagonal and off-diagonal parts are orthogonal for this quadratic form.
For an element $U=\diag(u_1,u_2,u_3)+u_0(E_{ij}+E_{ji})\in\V$ with $u_1+u_2+u_3=0$ we have
\[ \Tr(U^2) + \Tr\bigl((\Lambda U)^2\bigr) = \sum_{k=1}^3 (1+\lambda_k^2) u_k^2+2(1+\lambda_i\lambda_j)u_0^2,\]
so the form $\V\to\R$, $U\longmapsto \Tr(U^2)+\Tr\bigl((\Lambda U)^2\bigr)$ is positive definite.
Now consider the subspace $\W\subset\Sym_r(\R)$ consisting of all matrices
\[ S=\begin{pmatrix}X&Y&Z\\Y^T&U&0\\Z&0&2X\end{pmatrix}\in\Sym_r(\R), \]
where $X\in\Sym_s(\R)$, $Y\in\R^{s\times t}$, $Z=Z^T\in\Sym_s(\R)$, $U\in\V$.
For such an $S$, using $\Tr U=0$, the above formula becomes
\[ \left<S,S\right>_\U=
5\Tr(X^2)-(\Tr X)^2+2\Tr(YY^T)+2\Tr(Z^2)
 +\Tr(U^2)+\Tr\bigl((\Lambda U)^2\bigr). \]
So $\left<S,S\right>_\U$ is positive for every $S\in\W\setminus\{0\}$, since $(\Tr X)^2 = (\Tr(I_s X))^2 \le \Tr(I_s^2)\Tr(X^2) = \Tr(I_s)\Tr(X^2) = s\Tr(X^2) \le 3\Tr(X^2)$ by Cauchy-Schwarz for Frobenius inner product, since $Z$ is symmetric, and since the last two terms are positive definite on $\V$.  
Therefore $\W$ is positive definite.
Its dimension is
\[ \dim\W = s(s+1)+st+\binom{t}{2} %=\binom{r+1}{2}-(2r-3)\\
= \dim\Sym(\U)-(2r-3). \]
If $\I\subset\Sym(\U)$ is totally isotropic, then $\I\cap\Phi(\W)=\{0\}$.  Hence
\[ \dim\I\le \dim\Sym(\U)-\dim\W=2r-3. \]
\end{proof}

\begin{lemma}\label{lLinIndep}
Let $n\ge3$, let $v_i=\binom{l_i}{m_i}\in\R^6$ with $l_i\ne0$, $i=1,\ldots,n$, and set $V_i=v_i v_i^T$, $i=1,\ldots,n$.  
Assume $\left<V_i,V_j\right>=0$ for all $i,j=1,\ldots,n$, and assume that $l_i$, $l_j$, $l_k\in\R^3$ are linearly
independent for all $i\neq j\neq k\neq i$.  
Then the matrices $V_1,\ldots,V_n$ are linearly independent.
\end{lemma}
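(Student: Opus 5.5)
The plan is to argue by contradiction with a \emph{minimal} linear dependence, and to play the dimension of the span of the $V_i$ against the bound of Lemma \ref{index}. Suppose $\sum_{i\in S} c_i V_i=0$ with all $c_i\neq 0$ and $|S|=k$ minimal. Then the matrices $V_i$, $i\in S$, span a space of dimension exactly $k-1$. Since the $l_i$ are nonzero and any two are independent (they are part of an independent triple when $n\ge3$), the $v_i$ are pairwise non-proportional. Hence $k\ge 3$, because two distinct rank-one matrices $v_iv_i^T$, $v_jv_j^T$ are linearly independent.

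First I dispose of the case in which the vectors $v_i$, $i\in S$, are linearly independent. This is elementary: evaluate $\sum c_i v_iv_i^T$ on a vector $w$ with $w^Tv_j=1$ and $w^Tv_i=0$ for $i\neq j$. This gives $c_jv_j=0$, a contradiction. So we may assume the $v_i$, $i\in S$, are dependent. Let $\U:=\Span\{v_i\,|\,i\in S\}$ and $r:=\dim\U$; then $k\ge r+1$.

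Since $k\ge3$ and any three of the $l_i$ are linearly independent, the projection $\U\to\R^3$, $\binom{l}{m}\mapsto l$, is surjective. All $V_i$ lie in $\Sym(\U)$ and are mutually orthogonal and isotropic, so $\Span\{V_i\,|\,i\in S\}$ is a totally isotropic subspace of $(\Sym(\U),\left<\cdot,\cdot\right>_\U)$. Lemma \ref{index} then gives $k-1\le 2r-3$, i.e.\ $r+1\le k\le 2r-2$.

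The remaining task is to exclude this window, and I expect it to be the main obstacle. Comparing raw dimensions does not suffice, so I would extract additional structure from the dependence. The top-left $3\times3$ block of $\sum c_iV_i=0$ gives $\sum c_i l_il_i^T=0$, the off-diagonal block gives $\sum c_i l_im_i^T=0$, and the bottom block gives $\sum c_i m_im_i^T=0$. Because any three $l_i$ are independent, rank-one relations $\sum c_il_il_i^T=0$ in $\Sym_3(\R)$ need at least five terms ($l_il_i^T$ for four $l_i$ in general position are independent, since a nontrivial relation would force a conic through them in a degenerate way). So $k\ge5$.

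I would then try to enlarge the isotropic subspace. Pick $j\in S$ and write $V_j$ as a combination of the others. Use the orthogonality relations $\left<V_i,V_j\right>=0$, written out as
\[ (l_i\cdot l_j)^2+(m_i\cdot l_j+l_i\cdot m_j)^2-1=0, \]
to show that some extra symmetric matrix built from $\U$ (for instance a suitable $vw^T+wv^T$ with $v,w\in\U$ coming from the linear relation among the $v_i$) is orthogonal to all $V_i$ and isotropic but not in their span. This would push the isotropic dimension above $2r-3$ and contradict Lemma \ref{index}.

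If this enlargement fails, the fallback is a case split on $r\in\{4,5,6\}$ (note $r\ge4$ because $k\ge5>r$ is impossible for $r=3$ with the window $r+1\le k\le 2r-2=4$). In each case I would run the explicit normal form $G_1,G_2$ from the proof of Lemma \ref{index} and check directly that a rank-one dependence of length $k\le 2r-2$ inside a totally isotropic subspace forces two of the $l_i$ together with a third to be coplanar, contradicting the hypothesis.
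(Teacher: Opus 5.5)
There is a genuine gap. Your argument correctly reaches the window $r+1\le k\le 2r-2$, but nothing after that closes it. In the enlargement step you never specify a matrix $vw^T+wv^T$ that would be isotropic, orthogonal to every $V_i$ and outside their span, and there is no evident reason such a matrix exists. The fallback case split on $r\in\{4,5,6\}$ is only announced (``check directly''), not carried out. Your observation $k\ge5$ is correct, but it does not exclude the window for $r\ge4$. So as written the lemma is not proved.

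The missing idea is a sharper lower bound on $k$: $k\ge r+1$ is far too weak, and contrary to your remark, a dimension count does suffice once you have the right bound. Put $V:=(v_i)_{i\in S}\in\R^{6\times k}$ and $C:=\diag(c_i)_{i\in S}$, which is invertible because all $c_i\neq0$. The relation $\sum_{i\in S}c_iV_i=0$ reads $VCV^T=0$, so any two rows $\rho_a,\rho_b$ of $V$ satisfy $\rho_aC\rho_b^T=0$. Hence the row space $W$ of $V$ is totally isotropic in $\R^k$ for the nondegenerate symmetric form $(w,w')\mapsto wCw'^T$, and $\dim W=\rk V=r$. Since $W\subseteq W^\perp$ and $\dim W^\perp=k-\dim W$ by nondegeneracy, you get $r\le k/2$, i.e.\ $k\ge 2r$. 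Together with $k\le 2r-2$ from Lemma \ref{index}, this is an immediate contradiction.

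Your separate treatment of linearly independent $v_i$ is just the case $r=k$ of this bound, and the block relations $\sum c_il_il_i^T=0$ and so on become unnecessary. This is exactly the paper's proof.
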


\begin{proof}
Assume that $\{V_i\}$ are linearly dependent. 
Choose a minimally dependent subset. 
After reindexing, assume it is $V_1,\ldots,V_k$.
Since $l_1,l_2,l_3$ are linearly independent, also $V_1,V_2,V_3$ are linearly independent, and we have $3<k\le n$.
The dimension of $\Span\{V_1,\ldots,V_k\}$ is $k-1$, and there exist $\lambda_i\neq 0$, $i=1,\ldots,k$ such that $\sum_{i=1}^k \lambda_i V_i=0$.
This can be written as  $V\Lambda V^T=0$, where $\Lambda:=\diag(\lambda_1,\ldots,\lambda_k)$ and $V:=(v_1,\ldots,v_k)$.
So the rows of $V$ are isotropic vectors in the vector space $\R^k$ with nondegenerate bilinear form $\R^k\times \R^k\to\R$, $(w,w')\mapsto w\Lambda w'^T$. 
The dimension of the totally isotropic space spanned by the rows of $V$ is $\rk(V)$, thus $2\rk(V)\le k$.
On the other hand, since the projection of $\{v_i\}$ to $\{l_i\}$ spans $\R^3$, we may apply Lemma \ref{index} to $\U:=\Span\{v_1,\ldots,v_k\}$ and the totally isotropic subspace spanned by $V_1,\ldots,V_k$, and obtain $k-1\le 2\dim\U-3=2\rk(V)-3$.
So $2\rk(V)\le k\le 2\rk(V)-2$, a contradiction.
\end{proof}

\begin{theorem}\label{ninelines}
There are at most nine lines in Euclidean three-space which are pairwise at distance one.
\end{theorem}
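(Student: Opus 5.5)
Assume, for contradiction, that $\L_1,\ldots,\L_{10}$ are ten lines pairwise at distance one. The plan is to combine the earlier lemmas into a dimension count. Since $n=10\ge 5$, Lemma \ref{degLines}~a) shows that no two of the lines are parallel. Lemma \ref{degLines}~b) then applies to every triple, so the unit direction vectors $l_i,l_j,l_k$ are linearly independent for all pairwise distinct $i,j,k$.

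Next, set $V_i:=\varphi(\L_i)=v_iv_i^T$ with $v_i=\binom{l_i}{m_i}$ and $l_i\cdot l_i=1$. By Lemma \ref{iso}, $\left<V_i,V_i\right>=0$. Since the lines are pairwise nonparallel and at distance one, Lemma \ref{BilFo} gives $\left<V_i,V_j\right>=0$ for $i\ne j$. The hypotheses of Lemma \ref{lLinIndep} therefore hold with $n=10$, so $V_1,\ldots,V_{10}$ are linearly independent. Their span $\I$ is thus a totally isotropic subspace of dimension $10$.

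Finally, take $\U:=\Span\{v_1,\ldots,v_{10}\}\subseteq\R^6$, so $r=\dim\U\le 6$. The projection $\U\to\R^3$ is surjective because $l_1,l_2,l_3$ already span $\R^3$. Each $V_i=v_iv_i^T$ lies in $\Sym(\U)$: if $v_i=\beta c_i$ for a basis matrix $\beta$, then $V_i=\beta c_ic_i^T\beta^T$. Hence $\I$ is a totally isotropic subspace of $(\Sym(\U),\left<\cdot,\cdot\right>_\U)$. Lemma \ref{index} then gives $10=\dim\I\le 2r-3\le 9$, a contradiction. (Any ten of a larger family would yield the same contradiction, so at most nine such lines exist.)

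I expect no real obstacle, since all the substantive work is in the earlier lemmas. The only points needing care are, first, checking that Lemma \ref{BilFo} is legitimately applied, which requires the nonparallelism supplied by Lemma \ref{degLines}~a). Second, one must confirm that the $V_i$ lie in $\Sym(\U)$ and that the surjectivity hypothesis of Lemma \ref{index} holds, both of which are immediate.
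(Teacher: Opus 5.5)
Your proof is correct and follows the paper's argument step for step. You get nonparallelism and triple independence from Lemma \ref{degLines}, isotropy and pairwise orthogonality from Lemmas \ref{iso} and \ref{BilFo}, linear independence of the $V_i$ from Lemma \ref{lLinIndep}, and the bound $2r-3\le 9$ from Lemma \ref{index}; your explicit check that each $V_i$ lies in $\Sym(\U)$, and fixing $n=10$ for a contradiction, are only minor presentational differences from the paper's general-$n$ version.
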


\begin{proof}
Assume we are given $n$ lines with Pl\"ucker representatives $v_i$ and Veronese-Pl\"ucker coordinates $V_i=v_i v_i^T$, $i=1,\ldots,n$.
For $n\le 4$ there is nothing to prove, so assume $n\ge 5$.
By Lemma \ref{degLines} (a) no two lines are parallel, and by Lemma \ref{degLines} (b) every three direction vectors are independent.
Hence the projection $\Span\{v_1,\ldots,v_n\}\to\R^3$ is surjective.
Lemma \ref{BilFo} and Lemma \ref{iso} give $\left<V_i,V_j\right>=0$ for all $i,j=1,\ldots,n$.
By Lemma \ref{lLinIndep} and Lemma \ref{degLines} the Veronese-Pl\"ucker coordinates $V_1,\ldots,V_n$ of $n\ge 5$ lines must be independent.
So the dimension of the totally isotropic space spanned by $V_1,\ldots,V_n$ is $n$, and $n\le 9$ by Lemma \ref{index}.
\end{proof}

\bibliography{Literatur}
\bibliographystyle{plain}

\end{document}